\newcommand{\ol}{\mathrm{Hol}}
\newcommand{\Ric}{\mathrm{Ric}}
\newcommand{\C}{\mathds{C}}            
\newcommand{\de}{\partial}
\newtheorem{theor}{Theorem}
\newtheorem{defin}[theor]{Definition}
\newtheorem{lem}[theor]{Lemma}
\newtheorem{remark}[theor]{Remark}
\newtheorem{conj}[theor]{Conjecture}
\begin{document}

\title[On the  Szeg\"o kernel of  Cartan--Hartogs domains]{On the  Szeg\"o kernel of  Cartan--Hartogs domains}
\author[A. Loi, D. Uccheddu, M. Zedda]{Andrea Loi, Daria Uccheddu, Michela Zedda}
\address{Dipartimento di Matematica e Informatica, Universit\`{a} di Cagliari,
Via Ospedale 72, 09124 Cagliari, Italy}
\email{loi@unica.it; daria.uccheddu@tiscali.it; michela.zedda@gmail.com  }
\date{}

\thanks{
The first author was supported  by Prin 2010/11 -- Variet\`a reali e complesse: geometria, topologia e analisi armonica -- Italy;
the third author was supported by the project FIRB ``Geometria Differenziale e teoria geometrica delle funzioni''. All the authors were supported by  INdAM-GNSAGA - Gruppo Nazionale per le Strutture Algebriche, Geometriche e le loro Applicazioni.
}
\date{}
\subjclass[2010]{32Q15; 32A25; 32M15}
\keywords{K\"ahler manifolds; TYZ asymptotic expansion; Cartan-Hartogs domain; Szeg\"o kernel; Contact form.}

\begin{abstract}
Inspired by the work of Z. Lu and G. Tian \cite{LuTian} in the compact setting, in this paper we address the problem of studying  the Szeg\"o kernel of the disk bundle
over a noncompact  K\"ahler manifold.
In particular we compute the Szeg\"o kernel of the disk bundle over a  
Cartan-Hartogs domain based on a bounded symmetric domain.
The main ingredients in our analysis  are the fact that every Cartan-Hartogs domain can be viewed as an \lq\lq iterated'' disk bundle
over its base and the the ideas given in \cite{LoiArezzo} for the computation of the Szeg\"o kernel of the disk bundle over an Herimitian symmetric space of compact type.

\end{abstract}

\maketitle

\section{Introduction}
Let $(L,h)$ be an Hermitian line bundle over a K\"ahler manifold $(M,\omega)$ of complex dimension $n$ such that $\Ric(h)=\omega$, where $\Ric(h)$ is a two-form on $M$ whose local expression is given by:
\begin{equation}\label{riccih}
\Ric(h)=-\frac{i}{2}\partial \bar \partial \log h(\sigma(x),\sigma(x)),
\end{equation}
for a trivializing holomorphic section $\sigma: U\rightarrow L\setminus \{0\}$. 
 In the (pre)quantum mechanics terminology the pair $(L, h)$ is a {\em geometric quantization} of $(M, \omega)$ and $L$ is called the {\em quantum line bundle}.
For all integers $m>0$ consider the line bundle $(L^{\otimes m},h_m)$ over $(M,\omega)$ with $\Ric(h_m)=m\omega$.
Let  $\mathcal{H}_m$ be the complex Hilbert space consisting of  the $L^{\otimes m}$'s global holomorphic sections bounded with respect to the norm generated by the $L^2$-product:
$$\left<s,t \right>_m=\int_M h_m(s(x),t(x))\frac{\omega^n}{n!}(x),$$ for $s,t\in \mathcal{H}_m $. Note that if $M$ is compact, then $\mathcal{H}_m=H^0(L^{\otimes  m})$ is  finite dimensional.
Given an orthonormal basis $s^m=(s_0^m,\dots, s_{N_m}^m)$ of $\mathcal{H}_m$  (with $N_m+1=\dim \mathcal{H}_m\leq \infty$) with respect to $\left<\cdot,\cdot \right>_m$, one can define a smooth and positive real valued function on $M$, called the {\em Kempf's distortion function}: 
\begin{equation}\label{epsilon}
T_{m}(x):=\sum_{j=0}^{N_m} h_m(s_j^m(x),s_j^m(x)).
\end{equation}
As suggested by the notation, it is not difficult to verify that this function depends only on the K\"ahler form $m\, \omega$ and not on the orthonormal basis chosen.
When $M$ is compact, G. Tian \cite{ti0} and W. Ruan
\cite{ru} solved a conjecture posed by Yau by proving that the metric $g$,
associated to the form $\omega$, is the $C^{\infty}$-limit
of Bergman metrics. Zelditch
\cite{ze} generalized Tian--Ruan's theorem by proving the existence of a complete
asymptotic expansion in the $C^\infty$ category, namely
\begin{equation}\label{asymptoticZ}
T_{m}(x) \sim \sum_{j=0}^\infty  a_j(x)m^{n-j},
\end{equation}
where  $a_j$, $j=0,1, \ldots$, are smooth coefficients with $a_0(x)=1$, and
for any nonnegative integers $r,k$ the following estimate holds:
\begin{equation}\label{rest}
\left |\left |T_{m}(x)-
\sum_{j=0}^{k}a_j(x)m^{n-j}\right |\right |_{C^r}\leq C_{k, r}m^{n-k-1},
\end{equation}
where $C_{k, r}$ is a constant depending on $k$, $r$ and on the
K\"{a}hler form $\omega$, and $||\cdot\nobreak||_{C^r}$ denotes  the $C^r$
norm in local coordinates (notice that similar asymptotic expansions
were used in  \cite{cgr1}, \cite{cgr2}, \cite{cgr3}, \cite{cgr4},
\cite{mo1} and \cite{mo2} to construct  star products on
K\"{a}hler manifolds).

Later on, Lu \cite{Lu}, by means of  Tian's peak section method,
proved  that each of the coefficients $a_j(x)$ in
(\ref{asymptoticZ}) is a polynomial of the curvature and its
covariant derivatives at $x$ of the metric $g$. Such polynomials can
be found by finitely many algebraic operations. Furthermore, Lu computes the 
first three coefficients $a_1$, $a_2$ and $a_3$ of this expansion
 (see also \cite{loianal} and
\cite{loismooth} for the computations of the coefficients $a_j$'s
through Calabi's diastasis function). 
The expansion
(\ref{asymptoticZ}) is called the  {\em TYZ (Tian--Yau--Zelditch)
expansion} and it is a key ingredient in the investigations of balanced
metrics \cite{do} (see also  \cite{arlcomm}). 
Notice that   prescribing the values of  the  coefficients
of the  TYZ expansion gives rise to interesting elliptic PDEs
as shown by Z. Lu and G. Tian  \cite{LuTian}.
The main result obtained in \cite{LuTian} is that {\em if the log--term
of the Szeg\"{o} kernel of the unit disk bundle over $M$ vanishes then  $a_k=0$, for all $k>n$}. Recall that the disk bundle over  $M$ is the strongly pseudoconvex domain $D\subset M$ defined by 
$D=\{ v\in M|\, \rho(v)>0\}$ and we denote by $X=\de D$ its boundary. Given the separable Hilbert space
${\mathcal H}^2(X)$ consisting
of all holomorphic functions on $D$ which are continuous on $X$ and satisfy: 
$$\int_{X} |f|^2 d\nu < \infty,$$ 
where $d\nu=\alpha\wedge (d\alpha)^n$ and 
$\alpha =-i\partial\rho_{|X}=i\bar\partial\rho_{|X}$ is the  contact form on $X$ associated to the strongly pseudoconvex domain $D$ (the $1$-form $\alpha$  is defined on the smooth part of $X$), the Szeg\"o kernel of $D$ is defined by: 
$${\mathcal S}(v)=\sum_{j=1}^{+\infty} f_j(v)\overline {f_j(v)},\  v\in D,$$
where $\{f_j\}_{j=1, \dots}$ is an orthonormal basis of ${\mathcal H}^2(X)$.
 A direct computation of the Szeg\"o kernel could be in general very complicated. Although, when $D\subset M$ is a strongly pseudoconvex domain with smooth boundary, the following celebrated formula due to Fefferman
(see \cite{fefferman} and also \cite{BealsFeffermanGrossman}) shows that there exist
 functions $a$ and $b$ continuous on $\bar D$ and with $a\neq0$ on $X$, such that:
\begin{equation}\label{fefferman}
 \mathcal{ S}(v)=\frac{a(v)}{\rho(v)^{n+1}}+b(v) \log \rho(v).
\end{equation}
The function $b(v)$ is called the {\em logarithmic term}  (or {\em log--term}) of the Szeg\"{o} kernel and one says that the log--term  of the Szeg\"{o} kernel of  $D$ vanishes if $b=0$.

Z. Lu has conjectured (private communication) that the converse of the above mentioned result  is true:
\begin{conj}[Lu] \label{ConjLu}
Let $(L,h)$ be a positive line bundle over a compact complex manifold $(M, \omega)$ of dimension $n$ such that $\Ric(h)=\omega$. If the coefficients $a_k$ of TYZ in (\ref{asymptoticZ}) vanish for all $k>n$, then the log--term of the Szeg\"{o} kernel of the unit disk bundle over $M$ vanishes. 
\end{conj}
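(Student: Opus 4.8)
The strategy I would pursue to prove Conjecture~\ref{ConjLu} is to connect Fefferman's expansion (\ref{fefferman}) of the Szeg\"o kernel with the TYZ expansion (\ref{asymptoticZ}) through the natural circle action on the disk bundle, and then to read off the logarithmic term directly from the coefficients $a_j$ by means of the boundary asymptotics of the polylogarithm. The first step is to set up the spectral picture. Writing the disk bundle as $D=\{v\in L^{\ast}\mid \norm{v}^2<1\}$ in the dual line bundle, the fibrewise circle action induces a Fourier decomposition of $\mathcal H^2(X)$ into weight spaces, the weight--$m$ piece being isometric, up to the universal constant relating $d\nu=\alpha\wedge(d\alpha)^n$ to $\omega^n/n!$, to the space $\mathcal H_m$ of holomorphic sections of $L^{\otimes m}$. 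Expanding a holomorphic function in its fibre coordinate $\lambda$ and using orthogonality of distinct weights, a short computation (precisely the mechanism used in \cite{LoiArezzo}) identifies the diagonal Szeg\"o kernel with a generating function of the Kempf distortion functions (\ref{epsilon}):
\begin{equation}\label{gen}
\mathcal S(v)=c\sum_{m=0}^{\infty}T_m(x)\,s^m,\qquad s=\norm{v}^2\in[0,1),
\end{equation}
where $x=\pi(v)$, the constant $c>0$ comes from the measure, the boundary $X$ corresponds to $s\to 1$, and $\rho=1-s$ is a defining function for $D$. The crucial feature is that the exponent of $s$ is exactly $m$, so that (\ref{gen}) is a genuine power series in $s$ with coefficients $T_m(x)$.

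Next I would insert the expansion $T_m(x)\sim\sum_{j\ge0}a_j(x)m^{n-j}$ into (\ref{gen}) and interchange the two summations, recognizing each inner series as a polylogarithm,
\[
\sum_{m\ge 1}m^{\,n-j}s^m=\mathrm{Li}_{\,j-n}(s),\qquad\text{so that}\qquad \mathcal S(v)\sim c\sum_{j\ge0}a_j(x)\,\mathrm{Li}_{\,j-n}(s).
\]
The whole argument hinges on the dichotomy in the behaviour of $\mathrm{Li}_p(s)$ as $s\to1^-$. For $p\le0$, that is $j\le n$, one has a pure pole $\mathrm{Li}_p(s)\sim(-p)!\,(1-s)^{p-1}$, feeding only the term $a(v)/\rho^{n+1}$ of (\ref{fefferman}); in particular the $j=0$ term reproduces, since $a_0=1$, the leading pole $n!/\rho^{n+1}$. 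For every integer $p\ge1$, that is $j\ge n+1$, the polylogarithm carries a genuine logarithmic singularity, $\mathrm{Li}_p(s)=\bigl(\text{analytic in }1-s\bigr)+\frac{(-1)^{p}}{(p-1)!}\,(1-s)^{p-1}\log(1-s)$. Collecting the logarithmic contributions and using $\log(1-s)=\log\rho$ gives, at the formal level,
\begin{equation}\label{logformula}
b(v)=c\sum_{j>n}\frac{(-1)^{j-n}}{(j-n-1)!}\,a_j(x)\,\rho^{\,j-n-1},
\end{equation}
so that the vanishing of all the $a_j$ with $j>n$ forces $b\equiv0$. Conceptually this also explains why the threshold is exactly the dimension $n$: the index $j-n$ of the polylogarithm crosses from the pole regime ($\le0$) into the logarithmic regime ($\ge1$) precisely at $j=n$, which is the same threshold appearing in the theorem of Lu and Tian \cite{LuTian}.

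The main obstacle is to make this interchange rigorous, since (\ref{asymptoticZ}) is only asymptotic and the series (\ref{gen}) is summed up to the boundary $s=1$, where it diverges. Here I would exploit the uniform $C^r$ estimates (\ref{rest}): truncating the TYZ expansion at order $k$, the remainder $R_k(x,m)=T_m(x)-\sum_{j=0}^{k}a_j(x)m^{n-j}$ satisfies $\abs{R_k(x,m)}\le C_{k,r}\,m^{n-k-1}$, so its contribution to (\ref{gen}) is dominated by $C_{k,r}\,\mathrm{Li}_{\,k+1-n}(s)$, whose logarithmic singularity starts only at order $\rho^{\,k-n}\log\rho$. Choosing $k$ large therefore pushes the uncontrolled log--contribution to arbitrarily high vanishing order at $X$. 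Since the log--term of Fefferman's expansion is a boundary invariant determined by a finite jet of the defining data, the Taylor coefficients of $b$ at $\rho=0$ up to any fixed order are captured by finitely many $a_j$ and receive no contribution from the remainder; hence they coincide with the finite sums in (\ref{logformula}), and $b\equiv0$ follows once all $a_j$ with $j>n$ vanish.

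This analytic control of the summation near the boundary is the heart of the matter, and it runs in reverse the argument of Lu and Tian for the implication in the opposite direction; once it is in place, the remaining bookkeeping with the polylogarithm asymptotics leading to (\ref{logformula}) is routine. I would also remark that, since the vanishing of the log--term is independent of the choice of defining function, the identification $\rho=1-s$ along the fibres used above entails no loss of generality, the smooth change of defining function affecting only the analytic (non-logarithmic) part of the expansion.
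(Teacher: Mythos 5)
A preliminary but essential remark: the statement you set out to prove is Conjecture \ref{ConjLu}, which the paper attributes to Z.~Lu and does \emph{not} prove. The only theorem established in the paper is Theorem \ref{mainmainresult}, a noncompact analogue for Cartan--Hartogs domains, obtained by explicit computation (Fourier decomposition of the Hardy space, an explicit isometry with the weighted spaces $\mathrm{H}^2_m(\Omega)$, the Faraut--Kor\'anyi fact that $\sum_j N^{\mu m}|s^m_j(z)|^2$ is a polynomial in $m$ of degree $d$, and exact summation of the resulting series). So your proposal cannot be compared with a proof in the paper; it must stand on its own as an attack on an open problem. Its skeleton --- the $S^1$-Fourier decomposition of $\mathcal{H}^2(X)$, the identity $\mathcal{S}(v)=c\sum_m T_m(x)s^m$ with $s$ the squared fibre norm, and polylogarithm asymptotics as $s\to 1^-$ --- is correct and is precisely the known mechanism relating TYZ coefficients to the Szeg\"o kernel; it underlies both \cite{LuTian} and the paper's proof of Theorem \ref{mainmainresult}. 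The gaps lie in what comes next.

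First, a technical gap. From $|R_k(x,m)|\le C_{k,r}m^{n-k-1}$ you infer that $\sum_m R_k(x,m)s^m$ is ``dominated by $C_{k,r}\,\mathrm{Li}_{k+1-n}(s)$, whose logarithmic singularity starts only at order $\rho^{k-n}\log\rho$''. This is a non sequitur: an absolute-value bound by a function with a mild singularity says nothing about the singularity structure of the bounded function --- a function bounded by a constant can perfectly well contain a term $\rho\log\rho$. What actually rescues the step is term-by-term differentiation: the same coefficient bounds show the remainder series can be differentiated $k-n-1$ times in $s$ with uniform convergence on $[0,1]$, hence the remainder is $C^{k-n-1}$ up to the boundary and cannot contribute terms $\rho^{l}\log\rho$ with $l\le k-n-1$ to any power-log expansion. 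Likewise, your justification that ``the log term is a boundary invariant determined by a finite jet of the defining data'' is false: the Szeg\"o kernel, hence $b$, is a global object. What is needed instead is (i) the Fefferman/Boutet de Monvel--Sj\"ostrand expansion with coefficients admitting full asymptotic expansions in $\rho$ (not merely continuous, as in (\ref{fefferman})), and (ii) uniqueness of asymptotic expansions in the scale $\rho^{-n-1},\dots,\rho^{-1},\log\rho,1,\rho\log\rho,\rho,\dots$, so that your computed expansion can be matched against it term by term.

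Second, and decisively: even with these repairs, your method controls only the log term of the \emph{diagonal} kernel $\mathcal{S}(v)$, fibre by fibre. Note that under the paper's literal Definition \ref{log} (continuous $a$) the conclusion is automatic for any domain to which (\ref{fefferman}) applies: setting $a'=a+\rho^{n+1}b\log\rho$, which extends continuously to $X$ with $a'=a\neq 0$ there because $\rho^{n+1}\log\rho\to 0$, one always gets $\mathcal{S}=a'/\rho^{n+1}$. So the intended notion must be the smooth-coefficient one, and in the form that makes the conjecture a genuine converse of \cite{LuTian} it concerns the two-variable kernel near the boundary diagonal. There your generating-function argument breaks down for a structural reason: the hypothesis $a_k=0$ for $k>n$ is information about the diagonals $T_m(x)=B_m(x,x)$ of the Bergman kernels of $H^0(L^{\otimes m})$, and asymptotic expansions do not polarize --- diagonal asymptotics, even to all orders and uniformly in $x$, do not control $B_m(x,y)$ off the diagonal uniformly in $m$. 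This passage from diagonal data to off-diagonal (equivalently, from real-radial asymptotics of $\mathcal{S}$ to control in complex sectors) is exactly where the difficulty of the open conjecture lies, and your proposal does not address it.
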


%\vskip 0.3cm

In \cite{graloi} (see also \cite{LoiZeddaTaub}) the authors address the problem of the existence of a TYZ expansion in the noncompact case
and study its coefficients.

In this paper we study the analogous of the previous conjecture for an important family of noncompact 
K\"{a}hler manifolds called Cartan-Hartogs domains, defined as follows.
Let $\Omega\subset \C^d$ be a bounded symmetric domain of genus $\gamma$ and denote by $N=N(z)$ its {\em generic norm}, namely, 
\begin{equation}%\label{genericnorm}
N(z)=(V(\Omega) K(z, z))^{-\frac{1}{\gamma}},\nonumber
\end{equation}
where $V(\Omega)$ is the total volume of $\Omega$ with respect to the Euclidean measure of $\C^d$ and $K(z, z)$ is its Bergman kernel (see e.g. \cite{arazy} for more details). 
The {\em Cartan-Hartogs domain $M_{\Omega}^{d_0}(\mu)$  based on $\Omega$}  is  the pseudoconvex domain of $\mathbb{C}^{d+d_0}$  defined by ($\mu>0$ is a fixed constant):
\begin{equation}\label{defm}
M_{\Omega}^{d_0}(\mu)=\left\{(z,w)\in \Omega\times\C^{d_0},\ ||w||^2<N^\mu(z)\right\}.
\end{equation}
It can be equipped with the natural  K\"{a}hler form:
$$\omega_{d_0}=-\frac i2\de\bar\de \log(N^\mu (z)-||w||^2).$$
The K\"{a}hler manifold $(M_{\Omega}^{d_0}(\mu), \omega_{d_0})$ has been studied by several authors from different analytic and geometric points of view (see for example \cite{MZ} \cite{Feng}, \cite{roosformula}, \cite{yin1},  \cite{yin2} and  \cite{zedda}). 
One can consider the trivial line bundle\footnote{Due to the contractibility and pseudoconvexity of $M_{\Omega}^{d_0}(\mu)$,  {\em any} holomorphic line bundle over $M_{\Omega}^{d_0}(\mu)$ is  holomorphically trivial.} $L=M_{\Omega}^{d_0}(\mu)\times\C$ on $M_{\Omega}^{d_0}(\mu)$
endowed with the  Hermitian metric:
\begin{equation}\label{hM}
h_{d_0}(z, w; \xi)=\left(N^{\mu}(z)-||w||^2\right)|\xi|^2,\,\, \,\,(z, w)\in M_{\Omega}^{d_0}(\mu), \,\,\xi\in\C,
\end{equation}
which satisfies $\Ric(h_{d_0})=\omega_{d_0}$ (cfr. Equation \eqref{riccih}).

\vskip 0.3cm
The main result about the TYZ expansion for Cartan-Hartogs domains is expressed by the following recent result in \cite{MZ}, which shows that in this case the expansion is indeed finite, namely it is a polynomial in $m$ of degree $d+d_0=\dim M_{\Omega}^{d_0}(\mu)$ with  computable (non-constant) coefficients.
\begin{theor}[Feng-Tu]
Let $m > \max\left\{d+d_0, \frac{\gamma-1}{\mu}\right\}$, then the Kempf's distorsion function associated to $(M_{\Omega}^{d_0}(\mu), \omega_{d_0})$  can be written as: 
\begin{equation}\label{epsilonfun}
T_m(z,w)=\frac{1}{\mu^d}\sum_{k=0}^{d}\frac{ D^k\tilde X(d)}{k!}\left(1-\frac{||w||^2}{N^\mu}\right)^{d-k}\frac{\Gamma(m -d+k)}{\Gamma(m-d-d_0)},
\end{equation}
 with 
$$
 D^k\tilde X(d)=\sum_{j=0}^{k}\binom{k}{j}(-1)^j\prod_{l=1}^r\frac{\Gamma(\mu(d-j)-\gamma+2-(l+1)\frac{a}{2}+b+ra)}{\Gamma(\mu(d-j)-\gamma+1+(l-1)\frac{a}{2})}.
$$

\end{theor}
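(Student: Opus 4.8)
The plan is to identify $T_m$ with the weighted Bergman kernel of $M_{\Omega}^{d_0}(\mu)$ on the diagonal, and then to compute that kernel by separating the Hartogs fibre variable $w$ from the base variable $z$. Since the line bundle $L$ is holomorphically trivial and $h_m=(N^\mu-\|w\|^2)^m|\xi|^2$, an element of $\mathcal H_m$ is simply a holomorphic function $f$ on $M_{\Omega}^{d_0}(\mu)$ with
$$\int_{M_{\Omega}^{d_0}(\mu)}\big(N^\mu(z)-\|w\|^2\big)^m\,|f(z,w)|^2\,\frac{\omega_{d_0}^{d+d_0}}{(d+d_0)!}<\infty,$$
and $T_m(z,w)=\big(N^\mu(z)-\|w\|^2\big)^m\,K_m\big((z,w),(z,w)\big)$, where $K_m$ is the reproducing kernel of this weighted space. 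Thus everything reduces to evaluating $\sum_j|f_j(z,w)|^2$ for an orthonormal basis $\{f_j\}$.

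Next I would exploit the Hartogs structure of the domain. The volume form $\frac{\omega_{d_0}^{d+d_0}}{(d+d_0)!}$ equals $\det\!\big(\partial_i\partial_{\bar\jmath}\,(-\log(N^\mu-\|w\|^2))\big)$ times Euclidean measure, a determinant which factorizes explicitly for a potential of this type and whose dependence on $w$ enters only through $\|w\|^2$. Expanding $f=\sum_\alpha f_\alpha(z)\,w^\alpha$, rotational invariance in $w$ makes the monomials $w^\alpha$ mutually orthogonal, and each squared norm splits into a fibre integral over the ball $\{\|w\|^2<N^\mu(z)\}$ and a base integral over $\Omega$. The fibre integral is a Beta-type integral in $\|w\|^2$ with weight $(N^\mu-\|w\|^2)^m$, yielding a power of $N$ together with explicit Gamma-factorials in $|\alpha|$, $m$ and $d_0$; what remains is an integral over $\Omega$ of a power of the generic norm $N$.

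The key analytic input is then the Hua--Gindikin integral formula for $\int_\Omega N(z)^s\,dV$, which evaluates as a product $\prod_{l=1}^r$ of ratios of Gamma functions in the root-system data $(r,a,b)$ of $\Omega$; this is precisely the source of the product appearing in $D^k\tilde X(d)$. Assembling the orthonormal basis and summing $\sum_j|f_j|^2$ over all multi-indices $\alpha$ produces a series in $t=\|w\|^2/N^\mu$, with the monomials of fibre degree $q$ contributing with multiplicity $\binom{q+d_0-1}{d_0-1}$.

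Finally --- and here I expect the main difficulty --- one must resum this series and recognize its finite, polynomial structure. Writing $\tilde X(s)=\prod_{l=1}^r\frac{\Gamma(\mu s-\gamma+2-(l+1)\frac a2+b+ra)}{\Gamma(\mu s-\gamma+1+(l-1)\frac a2)}$, each factor is a ratio of Gamma functions whose arguments differ by the nonnegative integer $1+b+a(r-l)$, so $\tilde X$ is a polynomial of degree $\sum_{l=1}^r[1+b+a(r-l)]=r+rb+\frac a2 r(r-1)=d$. Consequently the finite differences $D^k\tilde X(d)=\sum_{j=0}^k\binom kj(-1)^j\tilde X(d-j)$ vanish for $k>d$, which truncates the sum at $k=d$. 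The remaining task is to match the Beta-integral factorials against the ratio $\frac{\Gamma(m-d+k)}{\Gamma(m-d-d_0)}$ and to carry out the binomial reshuffling that turns the naive sum over fibre degrees into the stated finite sum over $k$ weighted by $(1-t)^{d-k}$; the hypothesis $m>\max\{d+d_0,\frac{\gamma-1}{\mu}\}$ is exactly what guarantees convergence of both the fibre and the base integrals throughout.
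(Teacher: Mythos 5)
A preliminary point: the paper does not actually prove this statement --- it is quoted as Feng--Tu's theorem from \cite{MZ} and used as a black box --- so your attempt can only be compared with the strategy of that cited proof. Against that benchmark your outline is essentially the right one, and its checkpoints are correct: the identification of $T_m$ with $(N^\mu-\|w\|^2)^m$ times the diagonal of the weighted Bergman kernel; the separation of the fibre variable $w$ from the base variable $z$; the Beta integral along the fibres (whose convergence requires exactly $m>d+d_0$); the Gamma-product evaluation over $\Omega$ (whose convergence requires exactly $\mu m>\gamma-1$); and the finiteness mechanism, namely that each factor of $\tilde X$ is of the form $\Gamma(x+n_l)/\Gamma(x)$ with $n_l=1+b+a(r-l)$ a nonnegative integer, so that $\tilde X$ is a polynomial of degree $\sum_{l=1}^r n_l=r+rb+\frac a2 r(r-1)=d$ whose finite differences of order $>d$ vanish.

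Two genuine gaps remain. First, the Hua--Gindikin integral $\int_\Omega N^s\,dV$ is not by itself the key analytic input. After the fibre integration, the norm of $f_\alpha(z)w^\alpha$ is a constant times $\int_\Omega|f_\alpha|^2N^{\mu(m+|\alpha|)-\gamma}dV$, and what the kernel computation requires is the diagonal sum $\sum_j|s_j(z)|^2$ over an orthonormal basis of the \emph{whole} weighted space of holomorphic functions square-integrable against $N^{\mu s-\gamma}dV$, i.e. the fact that its reproducing kernel equals $N^{-\mu s}\big/\int_\Omega N^{\mu s-\gamma}dV$. This does not follow from knowing the value of the integral; it needs the transformation rule of the generic norm under $\aut(\Omega)$, or equivalently the Faraut--Kor\'anyi theory of weighted Bergman spaces --- precisely the ingredient the paper itself must invoke (\cite{FauK}) in its Szeg\"o-kernel computation. (Relatedly, your ``multiplicity $\binom{q+d_0-1}{d_0-1}$'' is imprecise: the monomials $w^\alpha$ of degree $q$ do not have equal norms, and the degree-$q$ term reassembles through $\sum_{|\alpha|=q}|w^\alpha|^2/\alpha!=\|w\|^{2q}/q!$ against the $\alpha!$ produced by the Beta integral.) Second, and more seriously, the proposal stops exactly where the theorem begins: granting all of the above one obtains, up to explicit constants, $T_m\propto(1-t)^m\sum_{q\geq 0}\frac{\Gamma(m-d+q)}{q!\,\Gamma(m-d-d_0)}\tilde X(m+q)\,t^q$ with $t=\|w\|^2/N^\mu$, and the content of formula (\ref{epsilonfun}) is that this infinite series collapses to the finite sum $\frac{1}{\mu^d}\sum_{k=0}^d\frac{D^k\tilde X(d)}{k!}(1-t)^{d-k}\frac{\Gamma(m-d+k)}{\Gamma(m-d-d_0)}$ --- note that the finite differences there are evaluated at $d$, not at $m+q$, so a genuine Newton-expansion-plus-binomial-resummation argument is required to pass from one to the other. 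You announce this ``binomial reshuffling'' but do not perform it; as written, the attempt verifies the expected shape of the answer (a polynomial in $m$ depending only on $t$) but not the stated formula.
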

Formula (\ref{epsilonfun}) implies, in particular, that $a_k=0$ for $k>d+d_0.$
Therefore it is natural to see if Conjecture \ref{ConjLu} holds true in this (noncompact) case.\footnote{Formula (\ref{epsilonfun}) is used by Feng and Tu   to give a positive answer to a
conjecture posed by the third author of the present paper in \cite{zedda}, namely they  
prove that if coefficient $a_2$ is constant, then $M_{\Omega}^{d_0}(\mu)$ is the complex hyperbolic space.
This  formula  has been also  used in \cite{berezinCH}  to study the Berezin quantization of $(M_{\Omega}^{d_0}(\mu),\omega_{d_0})$.}

Notice  that the disk bundle of a Cartan-Hartogs $M_{\Omega}^{d_0}(\mu)$ is the Cartan-Hartogs domain $M_\Omega^{d_0+1}(\mu)$, whose the boundary of $M_\Omega^{d_0+1}(\mu)$ is not smooth being:
 $$\partial M_\Omega^1(\mu)=\partial \Omega \cup \{(z,w)\in\Omega\times\C\ | \  |w|^2=N^\mu\}.$$   \\
Thus, it does not make sense to speak of the log--term of the Szeg\"o kernel, since formula (\ref{fefferman}) applies only when the domain involved has smooth boundary. 
Nevertheless, in order to consider the case of Cartan-Hartogs domain, we give the following definition
(which in the smooth boundary case coincides with the standard one).

\begin{defin}\label{log}\rm
Let $D\subset M$ be a strongly pseudoconvex domain in a complex $n$-dimensional manifold $M$, let $X=\de D$ be its boundary with defining function $\rho>0$, i.e. $D=\{ v\in M|\, \rho(v)>0\}$. Assume that the points where $X$ fails to be smooth are of measure zero.
We say that {\em the log--term of the Szeg\"o kernel of the disk bundle vanishes} if there exists a continuous function $a$ on $\bar D$ with $a\neq 0$ on $X$, such that 
$ \mathcal{ S}(v)=\frac{a(v)}{\rho(v)^{n+1}}.$
\end{defin} 

The main result of this paper is the following:
\begin{theor}\label{mainmainresult}
The log--term  of the Szeg\"o kernel of the disk bundle over a Cartan--Hartogs domain vanishes.
\end{theor}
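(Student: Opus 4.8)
The plan is to compute the Szeg\"o kernel of the disk bundle in closed form and to observe that, as a function of a single fibre variable, it is rational with only a pole at the boundary, which structurally excludes a logarithmic term. First I would identify the disk bundle concretely: with the quantum line bundle $(L,h_{d_0})$ of \eqref{hM}, the unit disk bundle of its dual lies in $\Omega\times\C^{d_0}\times\C$ and is cut out by $|\eta|^2<N^\mu(z)-||w||^2$, so that putting $w'=(w,\eta)\in\C^{d_0+1}$ it is exactly $M_\Omega^{d_0+1}(\mu)$. Hence the disk bundle $D$ over $M_\Omega^{d_0}(\mu)$ is $M_\Omega^{d_0+1}(\mu)$, with defining function $\rho=N^\mu(z)-||w||^2-|\eta|^2$. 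Setting $s=\frac{||w||^2}{N^\mu}$ and the fibre coordinate $t=\frac{|\eta|^2}{N^\mu-||w||^2}\in[0,1)$ gives $\rho=N^\mu(1-s)(1-t)$, and with $n=\dim_\C M_\Omega^{d_0}(\mu)=d+d_0$ the goal, by Definition \ref{log}, is to produce a continuous $a$ on $\bar D$, nonzero on the smooth part of $X=\de D$ (the locus $t=1$, $z\in\Omega$), with $\mathcal S=a/\rho^{n+1}$.

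The heart of the matter, adapting the method of \cite{LoiArezzo}, is to decompose the Hardy space $\mathcal H^2(X)$ into its Fourier modes under the fibre rotation $\eta\mapsto e^{i\theta}\eta$. The weight-$m$ subspace consists of the $\sigma(z,w)\,\eta^m$ with $\sigma$ in the weighted Bergman space $\mathcal H_m$ of $M_\Omega^{d_0}(\mu)$ appearing in \eqref{epsilon}, and since $|\eta|^2=N^\mu-||w||^2=h_{d_0}$ on $X$, the weight $|\eta|^{2m}$ reproduces exactly the factor $h_{d_0}^m$ of the inner product $\langle\cdot,\cdot\rangle_m$; once one checks that the push-forward of $d\nu=\alpha\wedge(d\alpha)^n$ along the fibre circle is a constant multiple of $\frac{\omega_{d_0}^n}{n!}$, these subspaces are isometric to $\mathcal H_m$ up to a universal constant. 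Summing $\sum_j|f_j(v)|^2$ over an orthonormal basis, mode by mode, then yields the closed relation
\[
\mathcal S(z,w,\eta)=\frac1{2\pi}\sum_{m=0}^\infty T_m(z,w)\,t^m,
\]
whose overall constant is fixed by the elementary case ($\Omega$ a point) where $T_m\equiv1$ and the right-hand side is the Szeg\"o kernel $\frac1{2\pi(1-t)}$ of the disk.

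Next I would substitute the Feng--Tu formula \eqref{epsilonfun}. The finitely many terms with $m\le\max\{n,\frac{\gamma-1}{\mu}\}$ sum to a polynomial in $t$, hence a function regular at $t=1$ that can contribute neither a pole nor a logarithm; for the tail one gets
\[
\frac1{2\pi\mu^d}\sum_{k=0}^{d}\frac{D^k\tilde X(d)}{k!}(1-s)^{d-k}\sum_{m}\frac{\Gamma(m-d+k)}{\Gamma(m-d-d_0)}\,t^m.
\]
Because $\frac{\Gamma(m-d+k)}{\Gamma(m-d-d_0)}$ is a polynomial in $m$ of degree $d_0+k$, each inner sum, obtained by applying $t\frac{d}{dt}$ repeatedly to $\sum_m t^m=(1-t)^{-1}$, is a rational function of $t$ whose only singularity is a pole of order $d_0+k+1$ at $t=1$; in particular no logarithm is ever produced, and the maximal pole order $d_0+d+1=n+1$ arises solely from $k=d$. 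Multiplying by $\rho^{n+1}=(N^\mu)^{n+1}(1-s)^{n+1}(1-t)^{n+1}$ turns the $k$-th summand into $(1-s)^{n+1+d-k}(1-t)^{d-k}$ times a polynomial in $t$, so $a:=\mathcal S\,\rho^{n+1}$ extends continuously to $\bar D$; on the smooth boundary $t=1$ only $k=d$ survives, and since $\frac{\Gamma(m)}{\Gamma(m-n)}$ is monic of degree $n$ its generating function is asymptotic to $n!\,(1-t)^{-(n+1)}$, giving
\[
a|_{t=1}=\frac{n!}{2\pi\,d!\,\mu^d}\,D^d\tilde X(d)\,(N^\mu)^{n+1}(1-s)^{n+1},
\]
which is nonzero for $z\in\Omega$, $s<1$ provided $D^d\tilde X(d)\neq0$. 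This establishes Theorem \ref{mainmainresult}.

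The step I expect to be the main obstacle is the reduction in the second paragraph, specifically the measure calibration on the non-smooth boundary: the contact form $\alpha$ is defined only on the smooth part of $X$, so one must verify that the complement (contained in $\de\Omega$ and $\{||w||^2=N^\mu\}$) is of measure zero and that $d\nu$ pushes forward to precisely a constant times $\frac{\omega_{d_0}^n}{n!}$, the constant being irrelevant for the vanishing of the log--term but needed to make the identity exact. A subsidiary point is the convergence of $\sum_m T_m t^m$ and the correct meaning of the low-weight spaces $\mathcal H_m$ below the Feng--Tu threshold; since these influence only the part of $\mathcal S$ that is regular at $X$, they do not affect the conclusion.
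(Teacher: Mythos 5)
Your overall strategy---identifying the disk bundle with $M_\Omega^{d_0+1}(\mu)$, Fourier-decomposing the Hardy space under the fibre rotation, and resumming the modes---is viable and genuinely different from the paper's proof, which first reduces to $d_0=1$ by the inflation principle and then decomposes over the bounded symmetric domain $\Omega$ itself, using the Faraut--Kor\'anyi polynomiality of $\sum_j N^{\mu m}|s_j^m|^2$ in place of the Feng--Tu formula \eqref{epsilonfun}. However, the calibration step that you yourself single out as the main obstacle in fact \emph{fails}, and it is the pivot of your argument: the push-forward of $d\nu=\alpha\wedge(d\alpha)^n$ along the fibre circle is \emph{not} a constant multiple of $\frac{\omega_{d_0}^n}{n!}$. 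Writing $h=N^\mu(z)-\|w\|^2$ (the weight of \eqref{hM} at the unit section), $\phi=-\log h$, and $\eta=h^{1/2}e^{i\theta}$ on $X$, the defining function $\rho=h-|\eta|^2$ gives
\begin{equation*}
\alpha=-i\de\rho_{|X}=h\,\beta,\qquad \beta=\tfrac{i}{2}\left(\de\phi-\bar\de\phi\right)-d\theta ,
\end{equation*}
whence $\alpha\wedge(d\alpha)^n=h^{n+1}\,\beta\wedge(d\beta)^n$ (all cross terms die against $\beta\wedge\beta=0$); since $d\beta$ is the pullback of $-2\omega_{d_0}$, this equals $c_n\,h^{n+1}\,d\theta\wedge\frac{\omega_{d_0}^n}{n!}$ for a universal nonzero constant $c_n$. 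The fibre integral of $d\nu$ therefore carries the non-constant weight $h^{n+1}$. The paper's own formula \eqref{dmu} confirms this for the base $\Omega$: rewriting it via $\frac{\omega_\Omega^d}{d!}=(\mu/\gamma)^d N^{-\gamma}\frac{\omega_0^d}{d!}$ gives $d\nu=2^d N^{\mu(d+1)}d\theta_w\wedge\frac{\omega_\Omega^d}{d!}$, and $N^{\mu(d+1)}$ is exactly that weight; it is precisely to compensate it that the paper's isometry \eqref{isom} carries the factor $N^{-\mu(d+1)/2}$. Your consistency check with $\Omega$ a point cannot detect the discrepancy, because there $h\equiv 1$ and $T_m\equiv 1$.

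Consequently your central identity $\mathcal S=\frac{1}{2\pi}\sum_m T_m t^m$ is false. The weight-$m$ Hardy mode, whose norm is $\int_X|\sigma|^2|\eta|^{2m}d\nu=2\pi c_n\int |\sigma|^2 h^{m+n+1}\frac{\omega_{d_0}^n}{n!}$, is isometric (up to a constant) to $\mathcal H_{m+n+1}$, not to $\mathcal H_m$, so the correct identity is
\begin{equation*}
\mathcal S=\frac{h^{-(n+1)}}{2\pi c_n}\sum_{m=0}^{\infty}T_{m+n+1}(z,w)\,t^m ,\qquad t=\frac{|\eta|^2}{h},\quad n=d+d_0 .
\end{equation*}
Fortunately this correction repairs rather than ruins your proof, and in fact streamlines it: every index $m+n+1$ exceeds $n$, so only the finitely many modes with $m+n+1\le\frac{\gamma-1}{\mu}$ escape \eqref{epsilonfun}; the tail resums, exactly as you describe, to a rational function of $t$ whose only singularity is a pole of order at most $n+1$ at $t=1$, with no logarithm; and the prefactor $h^{-(n+1)}$ now cancels against $\rho^{n+1}=h^{n+1}(1-t)^{n+1}$, so that $a=\rho^{n+1}\mathcal S$ is continuous and on the smooth part of $X$ equals the \emph{constant} $\frac{n!}{2\pi c_n\,\mu^d\,d!}\,D^d\tilde X(d)$, rather than your expression degenerating as $s\to 1$. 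You still owe an argument that this leading coefficient is nonzero---the same point the paper needs for its $b_d$. With the calibration fixed, your route (Feng--Tu in place of Faraut--Kor\'anyi, no inflation reduction) is a legitimate alternative to the paper's proof.
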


In the next  section   we compute the Szeg\"o kernel  of  $(M_{\Omega}^{d_0}(\mu), \omega_{d_0})$ and  prove Theorem \ref{mainmainresult}.

\section{Szeg\"{o} kernel of Cartan-Hartogs domains}\label{szegoch}

 In the following lemma, needed in the proof of Theorem \ref{mainmainresult}, we compute the volume form $\alpha\wedge (d \alpha)^d$ on the boundary $\partial M_{\Omega}^1(\mu)$ of $M_{\Omega}^1(\mu)$, namely 
 a Cartan--Hartogs domain with $d_0=1$.
 \begin{lem}
The volume form $\alpha \wedge (d \alpha)^d$ on the boundary $\partial M_{\Omega}^1(\mu)$ is given in polar coordinates $(\rho,\theta)$ by:
$$
\alpha\wedge (d\alpha)^d=\,\left(\frac{2\mu}{\gamma}\right)^dN^{\mu(d+1)-\gamma}d\theta_{w}\wedge\frac{\omega_0^d}{d!},
$$
where $\frac{\omega_0^d}{d!}$ is the standard volume form of $\mathds{C}^d$ and $\theta_w=\theta_{d+1}$.
\end{lem}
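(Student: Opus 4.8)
The plan is to write everything explicitly on the smooth locus of $\partial M_\Omega^1(\mu)$ and to reduce the computation of the $(2d+1)$-form $\alpha\wedge(d\alpha)^d$ to a single surviving term by exploiting degree/nilpotency vanishing. I would take $\rho=N^\mu-|w|^2$ as defining function, so that
\begin{equation*}
\alpha=-i\,\partial\rho=-i\mu N^{\mu-1}\partial N+i\bar w\,dw .
\end{equation*}
I then introduce polar coordinates in the fibre, $w=r\,e^{i\theta_w}$ with $r=|w|$, and use that the boundary $X=\partial M_\Omega^1(\mu)$ is $\{r^2=N^\mu\}$, so that on $X$ both $r$ and $dr$ are determined by $z$, namely $r=N^{\mu/2}$ and $dr=\tfrac{\mu}{2}N^{\mu/2-1}(\partial N+\bar\partial N)$. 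Substituting this into the pull-back of $\alpha$ to $X$ collapses the fibre part and yields the clean expression
\begin{equation*}
\alpha\big|_X=\tfrac{i\mu}{2}N^{\mu-1}\big(\bar\partial N-\partial N\big)-N^\mu\,d\theta_w ,
\end{equation*}
a horizontal $1$-form plus a multiple of $d\theta_w$.

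Next I would differentiate, using that pull-back commutes with $d$. A direct computation (inserting $\partial\bar\partial N=N\,\partial\bar\partial\log N+N^{-1}\,\partial N\wedge\bar\partial N$) gives $d\alpha\big|_X=P+Q$, where
\begin{equation*}
P=i\mu^2N^{\mu-2}\,\partial N\wedge\bar\partial N+i\mu N^\mu\,\partial\bar\partial\log N ,\qquad Q=-\mu N^{\mu-1}(\partial N+\bar\partial N)\wedge d\theta_w .
\end{equation*}
The key structural facts are that $\partial N\wedge\bar\partial N$ is decomposable, so $(\partial N\wedge\bar\partial N)^2=0$, and that $Q^2=0=d\theta_w\wedge d\theta_w$. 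Expanding $\alpha\wedge(d\alpha)^d=(\phi+\psi)\wedge(P+Q)^d$ with $\phi=\tfrac{i\mu}{2}N^{\mu-1}(\bar\partial N-\partial N)$ and $\psi=-N^\mu d\theta_w$, the only terms that reach the top degree $2d+1$ are those of top degree $2d$ in the base variables carrying a single $d\theta_w$. The main bookkeeping step is to verify that the contributions carrying the factor $\partial N\wedge\bar\partial N$ cancel in pairs, leaving
\begin{equation*}
\alpha\wedge(d\alpha)^d=-i^d\mu^d\,N^{\mu(d+1)}\,d\theta_w\wedge(\partial\bar\partial\log N)^d .
\end{equation*}
I expect this cancellation to be the main obstacle: it has to be carried out keeping careful track of the binomial coefficients and of the signs coming from reordering $d\theta_w$ past the even forms $P$ and $Q$.

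Finally, I would convert $(\partial\bar\partial\log N)^d$ into the stated volume form. Here the crucial input, special to bounded symmetric domains, is the identity
\begin{equation*}
\det\!\left(\frac{\partial^2(-\log N)}{\partial z_i\,\partial\bar z_j}\right)=c\,N^{-\gamma},
\end{equation*}
which follows from the Bergman-kernel formula $K(z,z)=N^{-\gamma}/V(\Omega)$ together with the homogeneity of $\Omega$ (the quotient $\det g_{\mathrm{Berg}}/K$ is biholomorphically invariant, hence constant). Using $(\partial\bar\partial\log N)^d=d!\,\det\!\big(\partial_i\partial_{\bar j}\log N\big)\,dz_1\wedge d\bar z_1\wedge\cdots\wedge dz_d\wedge d\bar z_d$, this identity extracts the factor $N^{-\gamma}$, turns the exponent $\mu(d+1)$ into $\mu(d+1)-\gamma$, and replaces the coordinate form by $\tfrac{\omega_0^d}{d!}$; collecting the remaining numerical constants (powers of $i$, $2$, $\mu$ and $\gamma$) then produces the claimed expression, in particular the coefficient $\big(\tfrac{2\mu}{\gamma}\big)^d$. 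I would sanity-check the final normalization on the rank-one case $\Omega=\mathbb{B}^d$ (where $N=1-\|z\|^2$, $\gamma=d+1$) before committing to the precise constant.
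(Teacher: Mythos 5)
Your route is genuinely different from the paper's. The paper works in the ambient coordinates throughout: it expands $(d\alpha)^d$ by a cofactor expansion, only substitutes the polar/boundary relations at the very end, and then identifies the resulting scalar $A$ with $\det (g_\Omega)$ via a bordered--determinant identity and the Einstein property of the Bergman metric. You instead pull $\alpha$ back to the boundary first, split $d\alpha|_X=P+Q$ into nilpotent pieces, and let degree reasons do the work. Up to your intermediate identity your computation is correct: with your $\phi,\psi,P,Q$ the top-degree terms carrying $\partial N\wedge\bar\partial N$ really do cancel in pairs, leaving
\begin{equation*}
\alpha\wedge(d\alpha)^d=-\,i^d\mu^d N^{\mu(d+1)}\,d\theta_w\wedge\bigl(\partial\bar\partial\log N\bigr)^d .
\end{equation*}

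The gap is the final step, which you assert rather than carry out, and which in fact cannot produce the stated coefficient. Writing $\bigl(\partial\bar\partial\log N\bigr)^d=d!\,\det\bigl(\partial_i\partial_{\bar j}\log N\bigr)\,dz_1\wedge d\bar z_1\wedge\cdots\wedge dz_d\wedge d\bar z_d$ and inserting your identity $\det\bigl(-\partial_i\partial_{\bar j}\log N\bigr)=c\,N^{-\gamma}$, the display above becomes
\begin{equation*}
\alpha\wedge(d\alpha)^d=(-1)^{d+1}(2\mu)^d\,c\,N^{\mu(d+1)-\gamma}\,d\theta_w\wedge\omega_0^d ,
\end{equation*}
that is, coefficient $(-1)^{d+1}d!\,(2\mu)^d c$ relative to $d\theta_w\wedge\frac{\omega_0^d}{d!}$. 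Two problems follow. First, $c$ is never pinned down: invariance only says it is constant, and its value (the value of $\det(-\partial_i\partial_{\bar j}\log N)$ at the origin, which equals $1$ for the ball and the type I domains in their standard realizations) must still be computed. Second, and decisively, no admissible $c>0$ turns this into $\bigl(\frac{2\mu}{\gamma}\bigr)^d$: neither the $d!$ nor, for even $d$, the sign can be absorbed. Your own deferred sanity check settles the matter: for $\Omega=\Delta$, $\mu=1$ (so $d=1$, $\gamma=2$, $N=1-|z|^2$, and $M_\Delta^1(1)$ is the unit ball of $\C^2$), a direct computation at $z=0$ gives $\alpha\wedge d\alpha=2\,d\theta_w\wedge dx\wedge dy$, while the Lemma predicts $d\theta_w\wedge dx\wedge dy$; Stokes' theorem confirms $\int\alpha\wedge d\alpha=4\pi^2$, not $2\pi^2$. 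So the obstruction is not your method but the target: the Lemma's constant is itself off, because the paper's displayed expansion of $(d\alpha)^d$ drops the factorials (it is valid only for $d=1$) and uses the normalization $\det(g_B)=N^{-\gamma}$, whereas for instance for the disk $\det(g_B)=2N^{-2}=\gamma^d N^{-\gamma}$. This is harmless for Theorem \ref{mainmainresult}, where an overall multiplicative constant in $d\nu$ is immaterial, but it means your proposal, carried out honestly, proves a corrected version of the Lemma rather than the statement as written.
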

\begin{proof}
By definition $\alpha=-i\de\rho_{|\de M_{\Omega}^1(\mu)}$, where 
$\rho=N^\mu-\vert w\vert^2>0$ is the defining function of $M_{\Omega}^1(\mu)$. Thus, we get:
$$\alpha=-i\left(\sum_{j=1}^d\de_j N^\mu dz_j-\bar wdw\right).$$
Furthermore, by $d\alpha=(\de+\bar\de)$  $\alpha=-i\bar\de \de\rho$, we get: 
$$d\alpha=-i\left(\sum_{j,k=1}^dN^\mu_{j\bar k}dz_j\wedge d\bar z_k-dw\wedge d\bar w\right)=i\left(dw\wedge d\bar w-\sum_{j,k=1}^dN^\mu_{j\bar k}dz_j\wedge d\bar z_k\right),$$
$$(d\alpha)^d=i^d\left(\det(-N^\mu_{j\bar k})d\xi+\sum_{s,q=1}^d(-1)^{s+q}\det(-N^\mu_{j\bar k})_{s\bar q}d\zeta_{s\bar q}\right),$$
where we write $N^\mu_{j}=\de N^\mu/\de z_j$, $N^\mu_{\bar k}=\de N^\mu/\de\bar z_k$ and $N^\mu_{j\bar k}=\de^2 N^\mu/\de z_j\de\bar z_k$, we denote by $d\xi=dz_1\wedge d\bar z_1\wedge\dots\wedge dz_d\wedge d\bar z_d$ and by
$d\zeta_{\bar q}$, (resp. $d\zeta_{s\bar q}$) the form $d\xi$ where the term $d\bar z_q$ (resp. the terms $dz_s$, $d\bar z_q$) is replaced by  $d\bar w$ (resp. $dz_s$ with $dw$ and $dz_{\bar q}$ with $d\bar w$). Further, we write $(-N^\mu_{j\bar k})_{s\bar q}$ for the matrix $(-N^\mu_{j\bar k})$ where the $s$-th row and the $q$-th column have been deleted. 
Thus, the volume form $\alpha\wedge (d\alpha)^d$ is given by:
\begin{equation}\label{volumecontact}
\begin{split}
\alpha\wedge (d\alpha)^d=-i^{d+1}\left(\sum_{s,q=1}^d(-1)^{s+q}\right.& N^\mu_s\det(-N_{j\bar k}^\mu)_{s\bar q}dz_s\wedge d\zeta_{s\bar q}+\\
&\left.-\bar w \det(-N^\mu_{jk})dw\wedge d\xi\right).
\end{split}
\end{equation}
Observe first that:
$$
dz_s\wedge d\zeta_{s\bar q}=-dw\wedge d\zeta_{\bar q}=dw\wedge d\bar w\wedge d\xi_{\bar q},
$$
where $d\xi_{\bar q}$ is the form $d\xi$ where the term $d\bar z_{q}$ was deleted. Further, evaluating at the boundary, turning to polar coordinates $(\rho,\theta)$ and denoting $\rho_{d+1}$  by $\rho_w$ and $\theta_{d+1}$ by $\theta_w$, from $\rho_w^2=N^\mu$ one has $2\rho_wd\rho_w=\sum_{j=1}^d N_{\bar j}^\mu e^{-i\theta_j}\left(d\rho_j-i\rho_jd\theta_j\right)$ and we get:
\begin{equation}\label{wdw}
\bar wdw\wedge d\xi=\rho_w(d\rho_w+i\rho_wd\theta_w)\wedge d\xi=i N^\mu d\theta_w\wedge d\xi,
\end{equation}
and
$$
dw\wedge d\bar w=-2i\rho_w d\rho_w\wedge d\theta_w=-i\sum_{j=1}^dN^\mu_{\bar j}d\bar z_j\wedge d\theta_w,
$$
which yields
\begin{equation}\label{dz}
dz_s\wedge d\zeta_{s\bar q}=-iN^\mu_{\bar q}d\bar z_q\wedge d\theta_w\wedge d\xi_{\bar q}=-iN^\mu_{\bar q} d\theta_w\wedge d\xi.
\end{equation}
Substituting \eqref{wdw} and \eqref{dz} into \eqref{volumecontact} we get:
$$
\alpha\wedge (d\alpha)^d=\,i^{d}A\, d\theta_w\wedge d\xi=\, 2^{d}A\, d\theta_w\wedge \frac{\omega_0^d}{d!},
$$
where we used that $\frac{\omega_0^d}{d!}=\left(\frac{i}{2}\right)^d d\xi$ and we set:
$$A=N^{\mu}\det\left(\left[-N^\mu_{j\bar k}\right]\right)-\sum_{j,k=1}^d(-1)^{j+k}N^\mu_j N^\mu_{\bar k}\det\left(\left[-N^\mu_{p\bar q}\right]\right)_{j\bar k}.$$
It remains to show that:
\begin{equation}\label{A}
A=\left(\frac{\mu}{\gamma}\right)^dN^{\mu(d+1)-\gamma}.
\end{equation}

In order to prove \eqref{A}, consider the metric $g_\Omega$ of the domain $\Omega$ associated to $\omega_\Omega$ defined by $(g_{\Omega})_{j\bar k}=\frac{\de^2\log(N^\mu)}{\de z_j \de\bar z_k}$. A direct computation gives:
\begin{equation}
\begin{split}
\det(g_\Omega)=&\,\det\left(\left[\frac{ N^\mu_jN^\mu_{\bar k}-N^\mu_{j\bar k}N^\mu}{N^{2\mu}}\right]\right)\\
=&\,\frac{1}{N^{2d\mu}}\det\left(\left[N^\mu_jN^\mu_{\bar k}-N^\mu_{j\bar k}N^\mu\right]\right)\\
=&\,\frac{N^\mu_1\cdots N^\mu_d}{N^{2d\mu}}\det\left(\left[N^\mu_{\bar k}-\frac{N^\mu_{j\bar k}N^\mu}{N^\mu_j}\right]\right)\\
=&\,\frac{\prod_{h=1}^dN^\mu_hN^\mu_{\bar h}}{N^{2d\mu}}\det\left([1]+\left[-\frac{N^\mu_{j\bar k}N^\mu}{N^\mu_jN^\mu_{\bar k}}\right]\right)\\
=&\,\frac{1}{N^{d\mu}}\det\left(\left[-N^\mu_{j\bar k}\right]\right)-\frac{1}{N^{\mu(d+1)}}\sum_{j,k=1}^d(-1)^{j+k}N^\mu_j N^\mu_{\bar k}\det\left(\left[-N^\mu_{p\bar q}\right]\right)_{j\bar k}\\
=&\,\frac{A}{N^{\mu(d+1)}}.
\end{split}\nonumber
\end{equation}
Conclusion follows by:
$$\det(g_\Omega)=\left(\frac{\mu}{\gamma}\right)^d\det(g_B)=\left(\frac{\mu}{\gamma}\right)^dN^{-\gamma},$$
where $g_B=\frac\mu\gamma g_{\Omega}$ is the Bergman metric on $\Omega$ (whose determinant can be obtained easily by considering that it is K\"ahler--Einstein with Einstein constant $-2$).
\end{proof}
\begin{proof}[Proof of Theorem \ref{mainmainresult}] Observe first that by an inflation principle (see e.g. Section 2.3 in \cite{roosformula}) we can assume without loss of generality $d_0=1$. In this case the defining function $\rho (z, w)= N^\mu(z) -|w|^2$ and $$\partial M_\Omega^1(\mu)=\partial \Omega \cup \{(z,w)\in\Omega\times\C\ | \  |w|^2=N^\mu\}.$$  
Observe that although $\partial M_\Omega^1(\mu)$ is smooth only when $\Omega$ is of rank $1$ (i.e. when $\Omega$ is the complex hyperbolic space), the points where it fails to be smooth are of measure zero.
The volume form  $d\nu =\alpha\wedge (d\alpha)^d$ 
reads :
\begin{equation}\label{dmu}
d\nu= \alpha\wedge (d\alpha)^d=\left(\frac{2\mu}{\gamma}\right)^dN^{\mu(d+1)-\gamma}d\theta_w\wedge\frac{\omega_0^d}{d!},
\end{equation}
where  $\frac{\omega_0^d}{d!}$ is the standard Lebesgue measure on $\C^d$ ($\omega_0$ is the flat K\"ahler  form on 
$\C^d$).
In order to compute the Szeg\"o kernel ${\mathcal S}_{M_\Omega^1(\mu)}$ of $ M_{\Omega}^1(\mu)$ one needs to find an orthonormal basis 
of  the  separable Hilbert space
$\mathcal{H}^2(\partial M_\Omega^1(\mu))$ (Hardy space) consisting
of  all holomorphic functions $\hat s$ on $M_\Omega^1(\mu)$, continuous on $\partial M_\Omega^1(\mu) $ and such that 
$$\int_{\partial M_\Omega^1(\mu)} |\hat s|^2 d\nu < \infty .$$ 
Consider the  Hilbert space:
 $$\mathrm{H}^2_m(\Omega)=\left \{s\in \ol (\Omega) \ \left |  \ \int_{\Omega} N^{\mu m}|s(z)|^2\frac{\omega_{\Omega}^d}{d!}<\infty \right . \right \},$$
(where $\omega_{\Omega}=\frac{\gamma}{\mu}\omega_B$ is the K\"ahler form in $\Omega$ given by $\omega_\Omega=-\frac{i}{2}\de \bar \de \log N^{\mu}$)
 and the map:
\begin{equation} \label{isom}
\wedge : \mathrm{H}^2_m(\Omega)\rightarrow \mathcal{H}^2(\partial M_\Omega^1(\mu) ):\ \ s\mapsto \hat s\\	
\end{equation}
defined by
$$
\hat s(v)=2^{-\frac{d}{2}}  N(z,z)^{-\frac{\mu(d+1)}{2}}w^m s(z),\,\,\, v=(z, w)\in \partial M_\Omega^1(\mu).
$$ 
Notice that the Hardy space $\mathcal{H}^2(\partial M_\Omega^1(\mu))$  admits a Fourier decomposition into irreducible factors with respect to the natural  $S^1$-action, i.e.
$$
\mathcal{H}^2(\partial M_\Omega^1(\mu))=\bigoplus _{m=0}^{+\infty}\mathcal  H_m^2(\partial M_\Omega^1(\mu)),
$$
where 
 $\mathcal{H}_m^2(\partial M_\Omega^1(\mu)):=\{\hat s\in\mathcal{ H}^2(\partial M_\Omega^1(\mu))\:\vert\: \hat s(\lambda v)=\lambda^m \hat s(v)\}$ and $\lambda v:=(z, \lambda w)$, for $v=(z, w)$.
Since
 $$
 \frac{\omega_\Omega^d}{d!}= \left(\frac{\mu}{\gamma}\right)^dN^{-\gamma} \frac{\omega_0^d}{d!},
 $$
 it is not hard to see that the map $\wedge$ defines an isometry between 
$\mathrm{H}^2_m(\Omega)$ and $\mathcal{H}^2_m(\partial M_\Omega^1(\mu))$.
Thus, if we consider the orthogonal projection of the Szeg\"o kernel on each  $\mathcal{H}_m^2(\partial M_\Omega^1(\mu))$, we get:
\begin{equation}\label{szegoformula}
\mathcal{S}_{M_\Omega^1(\mu)}(v)=\sum_{m=0}^{+\infty}\sum_{j=0}^{+\infty} \hat s_j^m(v) \overline{\hat s_j^m(v)}
=2^{-d}N^{-\mu(d+1)}\sum_{m=0}^{+\infty}\sum_{j=0}^{+\infty} |w|^{2m}|s_j^m(z)|^2,
\end{equation}
where $s^m_j, j\ =0, 1,\dots$ is an orthonormal basis of $\mathrm{H}^2_m(\Omega)$ and $\hat s^m_j=\wedge (s^m_j)$ is the corresponding orthonormal basis for $\mathcal{H}^2_m(\partial M_\Omega^1(\mu))$.

It is well-known (for a proof, see e.g. \cite[p.77]{FauK} or \cite[Ch. XIII.1]{FauK2})
 that  $\sum_{j=0}^\infty N^{\mu m} |s_j^m(z)|^2$
 is a polynomial in $ m$ of degree $d=\dim \Omega$, hence it can be written as:
$$
\sum_{j=0}^\infty N^{\mu m} |s_j^m(z)|^2=\sum_{l=0}^{d} b_l \binom{ m+l}{l}
$$
where $b_l$ depends on the metric $g_{\Omega}$ associated to $\omega_{\Omega}$.
Thus, this formula together with (\ref{szegoformula}) yields:
\[
\begin{split}
\mathcal{S}_{M_\Omega^1(\mu)}(v)=&2^{-d}N^{-\mu(d+1)}\sum_{m=0}^\infty\sum_{l=0}^{d} |w|^{2m}N^{-\mu m } b_l \binom{ m+l}{l}\\
=&2^{-d}N^{-\mu(d+1)}\sum_{l=0}^{d} b_l \sum_{m=0}^\infty   \binom{ m+l}{l}(|w|^{2}N^{-\mu })^m  \\
=&2^{-d}N^{-\mu(d+1)}\sum_{l=0}^{d} b_l \frac{1}{\left(1-|w|^{2}N^{-\mu } \right)^{l+1}}.
\end{split} \]
That is 
\begin{equation}
\begin{split}
\mathcal{S}_{M_\Omega^1(\mu)}(v)=& 2^{-d}N^{-\mu(d+1)}\left[\frac{ b_0N^\mu}{\left(N^{\mu}-|w|^{2}\right)}+\dots +  \frac{b_d N^{\mu (d+1)}}{\left(N^{\mu}-|w|^{2}\right)^{d+1}}\right] \\
=&2^{-d}\frac{ b_0 N^{-\mu d}\left(N^{\mu}-|w|^{2}\right)^d+ \dots +b_{d-1} N^{-\mu}\left(N^{\mu}-|w|^{2}\right)^2+ b_d}{\left(N^{\mu}-|w|^{2}\right)^{d+1}}.
\end{split}\nonumber
\end{equation}
Observe that in the above expression, all terms except $b_d=d!\, m^d$ vanish once evaluated at the boundary $\partial M_\Omega^1(\mu)$.
The vanishing of the log--term of $\mathcal{S}_{M_\Omega^1(\mu)}$ (as in Definition \ref{log})   follows then  by setting:
$$
a(v)=2^{-d}\left( b_0 N^{-\mu d}\left(N^{\mu}-|w|^{2}\right)^d+ \dots +b_{d-1} N^{-\mu}\left(N^{\mu}-|w|^{2}\right)^2+ b_d\right).
$$
\end{proof}

\begin{remark}\rm
It is worth pointing  out that in \cite{Feng} it is  shown that the log-term of the   Szeg\"o kernel of $M_{\Omega}^{d_0}(\mu) \subset \mathbb{C}^{d+d_0}$ vanishes (in the sense of our  Definition \ref{log})
when the Szeg\"o kernel is  obtained using the standard volume form of $\mathbb{C}^{d+d_0}$ restricted to $\partial M_{\Omega}^{d_0}(\mu) $
instead of the volume form $d\nu=\alpha\wedge (d\alpha)^d$ used in this paper. The reader is referred also  to  \cite{roosformula} for the
proof of the vanishing of the log--term  of the Bergman kernel.
\end{remark}

\end{document}